
\documentclass[review,3p]{elsarticle}

\usepackage{framed,multirow}

\usepackage{amssymb}
\usepackage{latexsym}

\usepackage{url}
\usepackage{xcolor}
\definecolor{newcolor}{rgb}{.8,.349,.1}
\usepackage{bm} 
\usepackage{physics}
\usepackage{amsmath}
\usepackage{amsthm}
\usepackage{mathtools}
\usepackage{booktabs}
\usepackage{yhmath}
\newcommand{\volume}{{\ooalign{\hfil$V$\hfil\cr\kern0.08em--\hfil\cr}}}
\usepackage{graphicx}
\usepackage[labelfont=it]{caption}
\usepackage{subcaption}
\usepackage{float}

\usepackage{cancel}
\usepackage[normalem]{ulem}

\usepackage{algorithm}
\usepackage{algpseudocode}
\usepackage{multicol}
\usepackage{tikz}


\usepackage{pifont}
%
%

\usepackage{hyperref}

\usepackage{lineno}
\newcommand\patchAmsMathEnvironmentForLineno[1]{%
\expandafter\let\csname old#1\expandafter\endcsname\csname #1\endcsname
\expandafter\let\csname oldend#1\expandafter\endcsname\csname end#1\endcsname
\renewenvironment{#1}%
{\linenomath\csname old#1\endcsname}%
{\csname oldend#1\endcsname\endlinenomath}}%
\newcommand\patchBothAmsMathEnvironmentsForLineno[1]{%
\patchAmsMathEnvironmentForLineno{#1}%
\patchAmsMathEnvironmentForLineno{#1*}}%
\AtBeginDocument{%
\patchBothAmsMathEnvironmentsForLineno{equation}%
\patchBothAmsMathEnvironmentsForLineno{align}%
\patchBothAmsMathEnvironmentsForLineno{flalign}%
\patchBothAmsMathEnvironmentsForLineno{alignat}%
\patchBothAmsMathEnvironmentsForLineno{gather}%
\patchBothAmsMathEnvironmentsForLineno{multline}%
}
\theoremstyle{remark} 
\newtheorem{remark}{Remark}

\theoremstyle{plain}

\theoremstyle{plain} 
\newtheorem{theorem}{Theorem}

\theoremstyle{definition}
\newtheorem{definition}{Definition}

\journal{Journal of Computational Physics}

\begin{document}


\begin{frontmatter}

\title{An Adaptive Flux Reconstruction Scheme for Robust Shock
Capturing}%
\author[1]{Sai Shruthi Srinivasan\corref{cor1}}
\cortext[cor1]{Corresponding author.}

\ead{sai.srinivasan@mail.mcgill.ca}
\author[1]{Siva Nadarajah}
\address[1]{Department of Mechanical Engineering, McGill University, Montreal, Quebec H3A OC3, Canada}




\end{frontmatter}
\vspace{-1.5cm}
\section{Introduction}\label{sec: intro}

In the case of hyperbolic conservation laws, high-order methods, such as the classical DG method, experience the phenomenon of unwanted high-frequency oscillations in the vicinity of a shock. Shock-capturing methods such as artificial dissipation, solution, flux, or TVD limiting are generally used to eliminate nonphysical oscillations and provide bounds on physical quantities.
For entropy-stable schemes, the additional objective would be to retain provable entropy dissipation guarantees of the underlying scheme, i.e. subcell limiting or entropy filtering~\cite{vilar2019posteriori,hennemann2021provably,rueda2021subcell,dzanic2022positivity}. 
The nonlinearly-stable flux reconstruction (NSFR) semi-discretization given in Eq.~\ref{eq: nsfrStrong} with a suitable flux reconstruction scheme has been demonstrated to mitigate spurious oscillations in the presence of shock discontinuities and at CFL values substantially larger than the DG variant of the NSFR scheme whilst retaining the property of entropy stability \cite{srinivasan2025investigation}.
NSFR schemes achieve this by introducing an alternative lifting operator for surface numerical flux penalization, albeit at the expense of accuracy. In this technical note, we present an adaptive approach to the choice of the lifting operator employed, which maintains higher accuracy and allows for larger CFL values while retaining the underlying provable attributes of the scheme.
While it cannot \textit{eliminate} oscillations such as the aforementioned shock-capturing methods, together with a positivity-preserving limiter, the scheme provides for solutions that are essentially oscillation-free.  
\begin{equation}\label{eq: nsfrStrong}
    (\bm{M}_m+\bm{K}_m)\frac{d}{dt}\hat{\bm{u}}_m(t)^T + [\bm{\chi}(\bm{\xi_v^r})^T\bm{\chi}(\bm{\xi_f^r})^T]\left[\left(\tilde{\bm{Q}} - \tilde{\bm{Q}^T}\right)\odot \bm{F_m^r}\right]\bm{1}^T+\sum_{f=1}^{N_f}\sum_{k=1}^{N_{fp}} \bm{\chi}(\bm{\xi}_{f,k}^r)^T{W}_{f,k}\hat{\bm{n}}^r\cdot \bm{f}_m^{*,r}=\bm{0}^T.
\end{equation}
Here, $\bm{\chi}(\bm{\xi})$ are the basis functions written as polynomials in reference space, with $\bm{v}$ and $\bm{f}$ denoting both volume and facet reference points; while $\left(\tilde{\bm{Q}} - \tilde{\bm{Q}^T}\right)$ is the general hybridized skew-symmetric stiﬀness operator involving both volume and surface quadrature evaluations from~\cite{chan2019skew} and $\bm{F_m^r}$ is a matrix that contains the reference two-point fluxes. The boundary term is summed over $N_f$ faces at $N_{fp}$ surface quadrature points, while $\hat{\bm{n}}^r$ is the facet normal in reference space and $\bm{f}_m^{*,r}$ is the numerical flux.

The NSFR scheme, which serves as a foundation for the novel scheme proposed in this paper, is effectively a filtered DG scheme. The filtering of the DG residual is controlled by the flux reconstruction parameter, $c$, which is embedded in the modified mass matrix $\bm{K_m}$,
\begin{align}
    (\bm{K}_m)_{ij} &\approx \sum_{s,v,w } c_{(s,v,w)} \int_{ {\Omega}_r} J_m^\Omega \partial^{(s,v,w)} \chi_i(\bm{\xi}^r) 
    \partial^{(s,v,w)}\chi_j(\bm{\xi}^r) d {\Omega_r},
    \label{eq:Km}
\end{align}
\noindent where $\partial^{(s,v,w)}$ represents a multidimensional index spanning the $p$-th order broken Sobolev-space~\cite{sheshadri2016stability}. Setting $c=0$ allows us to recover a DG scheme, as the lifting operator is unfiltered. On the other hand, setting the $c$ parameter to the largest stable value, $c_+$, recovers an FR scheme that mitigates unwanted oscillations by filtering the lifting operator. The mitigation of the oscillations enhances the robustness of the scheme and allows for a higher CFL. However, the FR scheme that we recover has a greater magnitude of error than its DG counterpart, as demonstrated in \cite{lambert2023l2,castonguay2012high,castonguay2012new}. As such, the ideal scheme for shock-dominated problems would employ a DG scheme in smooth regions whilst adapting the lifting operator in regions dominated by strong shocks. This would allow us to retain the DG scheme's advantage of increased accuracy while also retaining the FR scheme's advantages of dampened oscillations and timestep efficiency. This is the motivation behind the novel adaptive flux reconstruction (AFR) method.

In this technical note, Sec.~\ref{sec: afr} outlines the preserved properties of the new scheme, Sec.~\ref{sec: methods} outlines key implementation details, including the shock sensor and timestepping method, and finally Sec.~\ref{sec: results} presents a suite of test cases that demonstrate the advantages of the new adaptive scheme over the DG and FR schemes. The implementation and analysis of this novel scheme are summarized in Sec.~\ref{sec: conclusion}.

\section{Adaptive Flux Reconstruction}\label{sec: afr}
The adaptive implementation presents a lot of desirable advantages, but it is crucial that the newly proposed scheme still retains the properties of the NSFR scheme that serves as a foundation. Two crucial properties of interest are conservation and nonlinear stability. In the following sections, the two main theorems of the NSFR method presented in \cite{cicchino2024weight} are revisited to show that the AFR scheme conserves these properties.

\subsection{Conservation}
The NSFR scheme possesses the crucial property of global and local conservation. Discretizations that approximate weak solutions to conservation laws must discretely satisfy conservation, and as such, the new scheme must retain this property. Prior to demonstrating that the adaptive flux reconstruction scheme satisfies local conservation, we first revisit several fundamental properties of ESFR schemes. 

\begin{remark}
    Following the derivations in \citet{zwanenburg_esfr}, ESFR can be expressed as a filtered DG scheme for general three-dimensional curvilinear coordinates on mixed element types with a modified norm; provided that the matrix $\bm{K}$ satisfy the following definition. 
\end{remark}

\begin{definition}\label{def:correction_function}
In two-dimensions, the vector correction functions $\bm{g}^{f,k}(\bm{\xi}^r)\in\mathbb{R}^{1\times d}$ associated with face $f$, facet cubature node $k$ in the reference element, are defined as the tensor product of the $p+1$ order one-dimensional correction functions ($\bm{\chi}_{p+1}$ stores a basis of order $p+1$), with the corresponding $p$-th order basis functions in the other reference directions,
\begin{equation}
\begin{split}
    \bm{g}^{f,k}(\bm{\xi}^r) = \Big[\Big(\bm{\chi}_{p+1}(\xi)\otimes \bm{\chi}({\eta}) \Big)\Big(\hat{\bm{g}}^{f,k}_1\Big)^T , \Big(\bm{\chi}({\xi})\otimes \bm{\chi}_{p+1}(\eta)\Big)\Big(\hat{\bm{g}}^{f,k}_2\Big)^T \Big] \\
    = [{g}^{f,k}_1(\bm{\xi}^r), {g}^{f,k}_2(\bm{\xi}^r)],
    \end{split}
\end{equation}
such that
\begin{equation}
    \bm{g}^{f,k}(\bm{\xi}_{f_i, k_j}^r) \cdot \hat{\mathbf{n}}^r_{f_i, k_j}
    =
    \begin{cases}
        1 \quad \text{if $f= f_i$ and $k= k_j$} \\
        0 \quad \text{otherwise}.
    \end{cases}
    \label{eq:BCs}
\end{equation}
\end{definition}
Following the derivations in Zwanenburg and Nadarajah [10] on the equivalence of energy stable flux reconstruction and discontinuous Galerkin methods, the modified mass matrix introduced previously is related to the correction function formulation via
\begin{equation}
    \nabla^r \cdot \bm{g}^{f,k}(\bm{\xi}^r)
    = \bm{\chi}(\bm{\xi^r})(\bm{M}+\bm{K})^{-1} \bm{\chi}(\bm{\xi}^r_{f,k})^T W_{f,k}.
    \label{eq:corr}
\end{equation}
Hence, NSFR schemes naturally give rise to a modified lifting operator on the surface integral, which can be expressed as
\begin{equation}
    (\bm{M}_m + \bm{K}_m)^{-1} \sum_{f=1}^{N_f} \sum_{k=1}^{N_{fp}} \bm{\chi}(\bm{\xi}_{f,k}^r)^T W_{f,k} \hat{\bm{n}}^r \cdot \bm{f}_m^{*,r}.
    \label{eq:mod_lift}
\end{equation}
This modified lifting operator will be associated with a conservative scheme provided that the conditions from the following theorem are satisfied. Note that in Eq.~\ref{eq:corr}, the mass matrix, $\bm{M}$ and FR  filter matrix $\bm{K}$ are evaluated on the reference elements as opposed to $\bm{M}_m$ and $\bm{K}_m$ as in Eq.~\ref{eq:Km} which includes the element Jacobian. 

\begin{theorem}
    The AFR discretization which uses the NSFR discretization in Eq.~\ref{eq: nsfrStrong} is locally conserving
\end{theorem}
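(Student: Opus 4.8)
The plan is to establish local conservation by testing the residual form of Eq.~\ref{eq: nsfrStrong} against the constant function, which is the standard device in this setting: if left-multiplication by the constant mode collapses the semi-discretization to $\frac{d}{dt}\int_{\Omega}u_h\,d\Omega + \oint_{\partial\Omega}\hat{\bm{n}}\cdot\bm{f}^{*}\,dS = 0$, then the cell-integrated conserved quantity evolves only through the numerical interface flux, which is precisely local conservation. I would denote by $\bm{1}$ the coefficient vector representing the constant in the solution basis, left-multiply every term of Eq.~\ref{eq: nsfrStrong} by it, and invoke the partition-of-unity property $\sum_{i}\chi_i(\bm{\xi})\equiv 1$ of the basis at the volume and facet cubature nodes so that $\bm{1}\,\bm{\chi}(\bm{\xi})^{T}$ collapses to a row of ones over those nodes.

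First I would treat the temporal term. Since $\bm{1}\,\bm{M}_m$ returns the Jacobian-weighted cubature weights, $\bm{1}\,\bm{M}_m\frac{d}{dt}\hat{\bm{u}}_m^{T}=\frac{d}{dt}\int_{\Omega}u_h\,d\Omega$. The essential AFR-specific observation is that $\bm{1}\,\bm{K}_m=\bm{0}$: by the definition in Eq.~\ref{eq:Km}, every entry of $\bm{K}_m$ is a sum of inner products $\partial^{(s,v,w)}\chi_i\,\partial^{(s,v,w)}\chi_j$ with $s+v+w\ge 1$, and these annihilate the constant regardless of the values of the coefficients $c_{(s,v,w)}$. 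Hence, whatever element-wise value the shock sensor assigns to $c$, the filter matrix contributes nothing to the cell-integrated balance, so the adaptive modification of the lifting operator cannot introduce a spurious volumetric source.

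Next I would collapse the flux-differencing volume term. Using the partition of unity, $\bm{1}\,[\bm{\chi}(\bm{\xi_v^r})^{T}\,\bm{\chi}(\bm{\xi_f^r})^{T}]$ becomes a row of ones over the combined volume-and-facet node set, so the term reduces to $\sum_{i,j}(\tilde{\bm{Q}}-\tilde{\bm{Q}^T})_{ij}(\bm{F_m^r})_{ij}$. Because the hybridized operator $\tilde{\bm{Q}}-\tilde{\bm{Q}^T}$ is skew-symmetric while the two-point flux matrix $\bm{F_m^r}$ is symmetric (the numerical flux satisfies $f_S(u_i,u_j)=f_S(u_j,u_i)$), the Hadamard double contraction vanishes identically. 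For the surface term, the partition of unity gives $\bm{1}\sum_{f}\sum_{k}\bm{\chi}(\bm{\xi}_{f,k}^{r})^{T}W_{f,k}\hat{\bm{n}}^{r}\cdot\bm{f}_m^{*,r}=\sum_{f}\sum_{k}W_{f,k}\hat{\bm{n}}^{r}\cdot\bm{f}_m^{*,r}$, the discrete net flux through $\partial\Omega$; crucially, this expression carries no dependence on $c$, so it is identical to the flux term of the unfiltered DG scheme.

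Assembling the three contributions yields the desired balance, and the fact that this balance is entirely $c$-independent is exactly why the adaptive scheme inherits conservation from NSFR. The main obstacle I anticipate is not the skew-symmetry cancellation, which is a standard entropy-conservation argument, but rather verifying rigorously that the statement is insensitive to the adaptivity: one must confirm that the constant lies in the kernel of $\bm{K}_m$ for every admissible $c$ (which reduces to the $s+v+w\ge 1$ derivative structure) and that the partition-of-unity reductions hold on the curvilinear, mixed-element cubature nodes used here. Once those are in hand, the element-local balance telescopes over faces, using the single-valued interface flux $\bm{f}_m^{*,r}$ shared by neighbours with possibly different values of $c$, to recover global conservation as well.
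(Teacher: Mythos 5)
Your proposal is correct and follows essentially the same route as the paper: left-multiply Eq.~\ref{eq: nsfrStrong} by the constant coefficient vector, kill the volume flux-differencing term via the skew-symmetry of $(\tilde{\bm{Q}}-\tilde{\bm{Q}^T})\odot\bm{F_m^r}$, use $\hat{\bm{1}}\bm{K}_m=\bm{0}$ to reduce the temporal term to the mass matrix alone, and observe that the resulting balance is independent of the sensor-scaled $c$. The only difference is that you spell out the justifications (partition of unity, the $s+v+w\ge 1$ derivative structure of $\bm{K}_m$, symmetry of the two-point flux) that the paper delegates to citations.
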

\begin{proof}
    Starting with the NSFR discretization,
    \begin{equation}\label{eq: nsfrStrong}
        (\bm{M}_m+\bm{K}_m)\frac{d}{dt}\hat{\bm{u}}_m(t)^T + [\bm{\chi}(\bm{\xi_v^r})^T\bm{\chi}(\bm{\xi_f^r})^T]\left[\left(\tilde{\bm{Q}} - \tilde{\bm{Q}^T}\right)\odot \bm{F_m^r}\right]\bm{1}^T+\sum_{f=1}^{N_f}\sum_{k=1}^{N_{fp}} \bm{\chi}(\bm{\xi}_{f,k}^r)^T{W}_{f,k}\hat{\bm{n}}^r\cdot \bm{f}_m^{*,r}=\bm{0}^T 
    \end{equation}
    Given that $\bm{\chi}(\bm{\xi}_v^r)\hat{\bm{1}}^T = \bm{1}^T$, we left multiply by $\hat{\mathbf{1}}$. As outlined in \cite{cicchino2024weight}, $\left(\tilde{\bm{Q}} - \tilde{\bm{Q}^T}\right)\odot \bm{F_m^r}$ is skew-symmetric, its associated term vanishes and we are left with
        \begin{align}
        \hat{\mathbf{1}}(\bm{M}_m+\bm{K}_m)\frac{d}{dt}\hat{\bm{u}}_m(t)^T = -\sum_{f=1}^{N_f}\sum_{k=1}^{N_{fp}}{W}_{f,k}\hat{\bm{n}}^r\cdot \bm{f}_m^{*,r}
        \end{align}
    Using $\hat{\bm{1}}\bm{K}_m = 0$, this is further simplified to,
        \begin{equation}\label{eq: local_cons}
        \hat{\mathbf{1}}\bm{M}_m\frac{d}{dt}\hat{\bm{u}}_m(t)^T = -\sum_{f=1}^{N_f}\sum_{k=1}^{N_{fp}}{W}_{f,k}\hat{\bm{n}}^r\cdot \bm{f}_m^{*,r}
        \end{equation}
    which recovers local conservation. The adaptive implementation of the flux reconstruction parameter, $c$, contained within $\bm{K}_m$ does not impact local conservation, therefore the scheme is locally conservative.
\end{proof}

\begin{remark}
    Using Eq.~\ref{eq: local_cons}, the NSFR scheme is said to be globally conservative after summing across all of the elements and using a telescopic flux. This proof also follows for the AFR scheme.
\end{remark}

\subsection{Nonlinear Stability}
Another key property of NSFR is nonlinear stability. Provable nonlinear stability bounds the discrete approximation within a norm to the true solution and ensures that the discretization does not diverge. Provided that positivity of thermodynamic quantities are preserved, the discrete solution remains stable. While stability does not imply convergence, it is still an integral property to conserve in the AFR scheme to ensure that the scheme is stable and satisfies the second law of thermodynamics.

As per \citet{cicchino2025discretely}, Eq.~\ref{eq: nsfrStrong} satisfies the following formulation, 
\begin{equation}
    \sum_{n=1}^{n_{state}}\hat{\bm{v}}_n(\bm{M}_m + \bm{K}_m)\frac{d}{dt}\hat{u}_{m,n}(t)^T =  \sum_{n=1}^{n_{state}}\sum_{f=1}^{N_f}\sum_{k=1}^{N_{fp}}(\bm{\psi}_n(\bm{\xi_{fk}^r}- v_n(\bm{\xi_{fk}^r})\bm{f}_{m,n}^{*,r})\cdot\hat{\bm{n}^r}
\end{equation}
With the appropriate boundary conditions and an entropy conserving flux, discrete entropy conservation within the $(\bm{M}_m + \bm{K}_m)$-norm is proven.

\begin{remark}
    Akin to the NSFR discretization, the AFR discretization is discretely entropy conserving if the two-point flux satisfies the Tadmor shuffle condition. The adaptive implementation of the FR correction parameter does not affect the property of entropy stability as the discrete approximation is still bounded within a norm.
\end{remark}

\subsection{Other Considerations}\label{sec: afr_other}
It is also important to note that this implementation would require a positivity-preserving mechanism. In this implementation of the scheme, we use the positivity-preserving limiter introduced by \citet{zhang_ppl} with modifications from \cite{wang_ppl} and \cite{srinivasan2025investigation}. 

In addition to the positivity-preserving mechanism, we also require a suitable discontinuity sensor to be incorporated in order to effectively adapt the lifting operator according to the strength of the shock or discontinuity. In the proposed implementation of the scheme, the value of $c$ is scaled in the range of $[0,c_+]$. Any discontinuity sensor with an output in the range of $[0,1]$ can easily be incorporated into this adaptive scheme to scale the FR parameter. This allows for different sensors to be used depending on the case if required, thereby making it a flexible implementation. In this paper, the modal discontinuity sensor introduced by \citet{persson_modal} is used. The original implementation of the sensor is used, but the smooth function to determine the artificial dissipation is modified to suit the scaling of the $c$ parameter. The changes to the smooth function are discussed in Sec.~\ref{sec: sensor}.
\section{Methodology}\label{sec: methods}
This section lays out the details of the shock sensor and timestepping method chosen for the scheme's implementation.   

\subsection{Modal Sensor}\label{sec: sensor}
As described in Sec.~\ref{sec: afr_other}, the scheme requires a suitable shock sensor to determine the value of the flux reconstruction parameter. The modal sensor developed by \citet{persson_modal} uses the rate of decay of the expansion coefficients to determine the strength of the discontinuity and determines the appropriate amount of artificial viscosity to be added using the rate of decay and a smooth function. In this paper, a slight modification is made to the smooth function to determine the FR parameter instead of the artificial viscosity. Using $s_e = \log_{10}S_e$, where $S_e$ is the output of the sensor, and $s_0 = -4\log_{10}p$, we define the smooth function as,
\begin{equation}
    \varepsilon = \begin{cases}
        0 \qquad &\text{if} \: s_e < s_0 - \kappa, \\
        \frac{1}{2}\left(1 + \sin\frac{\pi(s_e - s_0)}{2}\right) \qquad &\text{if} \: s_0 - \kappa < s_e \leq s_0 + \kappa,\\
        1 \qquad  &\text{if} \: s_0 + 1 < s_e    \end{cases}
\end{equation}

We can see that $0 \leq \varepsilon \leq 1$, which can be used to scale the FR parameter, ie, $c_{sensor} = \varepsilon\cdot c_+$ where $0 \leq \varepsilon\cdot c_+ \leq c_+$. This yields a reliable value for the FR parameter based on the strength of the discontinuities and ensures that the solution uses FR only when required and applies DG elsewhere.

\subsection{Temporal Discretization}\label{sec: rk_method}
In this work, the solution is advanced in time using the strong stability preserving third-order accurate Runge-Kutta (SSPRK3) explicit time advancement method \cite{shu1988efficient}. The convective-based CFL condition used in this work is determined as follows:
\begin{align}
    \Delta t = CFL \: \frac{\Tilde{\Delta x}}{\lambda_{max}}, \quad \Tilde{\Delta x} = \frac{x_{max}-x_{min}}{(DOF)^{1/dim}}, \quad \lambda_{max} = max(|v|+c),
\end{align}
where $\Tilde{\Delta x}$ is the approximate grid spacing and $\lambda_{max}$ is the maximum wavespeed of the initial condition with speed of sound $c = \sqrt{\gamma R T}$.
\section{Numerical Results}\label{sec: results}
\subsection{Gaussian Pulse Test}
\label{sec: gaussian-pulse-test}
The first test case is an order of accuracy test that involves a smooth convecting density pulse as seen in \citet{dzanic_gaussian_pulse}. With a smooth problem such as this, it is expected that both the DG and FR schemes will converge at an order of $p+1$, but DG will have a lower magnitude of error than FR. While the density pulse is smooth, the sharp gradients at the pulse still activate the shock sensor, resulting in the adaptation of the lifting operator in certain regions of the domain. This makes it the ideal test case to demonstrate that the new adaptive scheme still retains the desired $p+1$ order of accuracy. The problem is solved on a domain of size $[-0.5,0.5]^2$ with periodic boundary conditions and initial condition,
\begin{equation*}
    \rho_0(x,y) = 0.01 + exp(-\sigma (x^2+y^2)),\ u_0(x,y) = 1,\ v_0(x,y)=1,\ p_0(x,y)=1,
\end{equation*}
where $\sigma = 500$ is the strength of the pulse. The final time is $t=1.0s$, which results in the exact solution being equal to the initial condition. 

This convergence test is run for the DG and FR schemes in addition to the adaptive scheme to demonstrate the accuracy of the solution compared to the original schemes. The plots provided in Figure~\ref {fig:gp_ooa_plots} show the $L_1$, $L_2$ and $L_\infty$ errors for polynomial degrees $2$ and $3$ with grid size starting at $8\times8$ and successively refined to $256\times256$.

The reference line shown in the plots provides the expected error if the error diminishes at a rate of $p+1$ starting from the error for the DG scheme at the second coarsest grid. We see from the plots that the original DG and FR schemes converge with an order of $p+1$, and the FR scheme has a greater magnitude of error than the DG scheme. The error for the adaptive scheme lies within the error of the two original schemes, which is expected since the scheme incorporates both DG and FR in the solution. It is noted that as the grid is refined, the error for the new scheme is identical to DG since the shock sensor is not turned on for the finer grids. This test demonstrates that the new scheme preserves a high order of accuracy and that the adaptation of the lifting operator in select regions does not contribute to significant error. This slight increase in error is negligible when considering the robustness and timestep advantages that the new scheme inherits from FR. This increase in robustness and timestep will be showcased in the following test cases.
\begin{figure}[ht]
    \centering
    \includegraphics[width=0.35\textwidth]{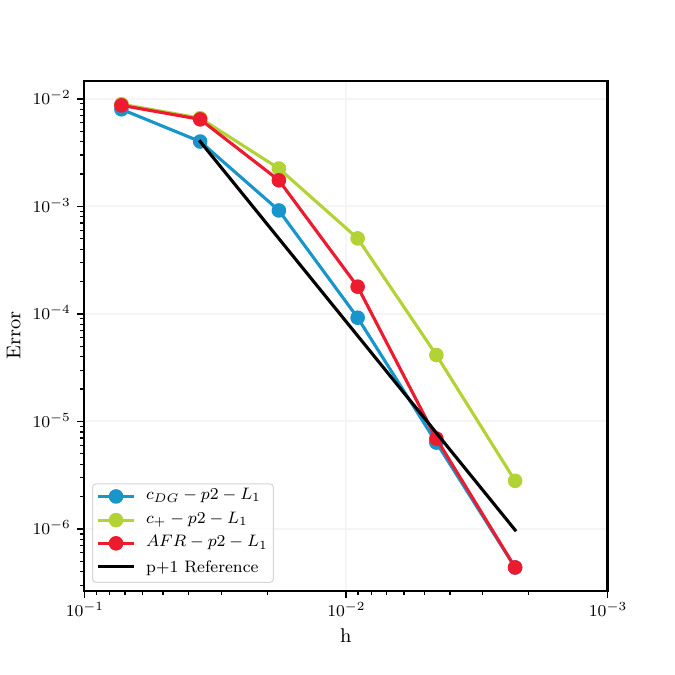}\includegraphics[width=0.35\textwidth]{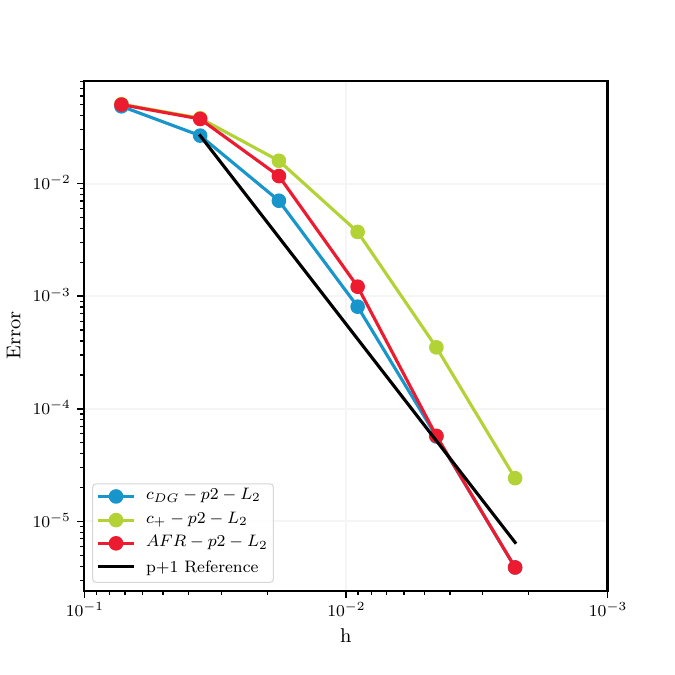}\includegraphics[width=0.35\textwidth]{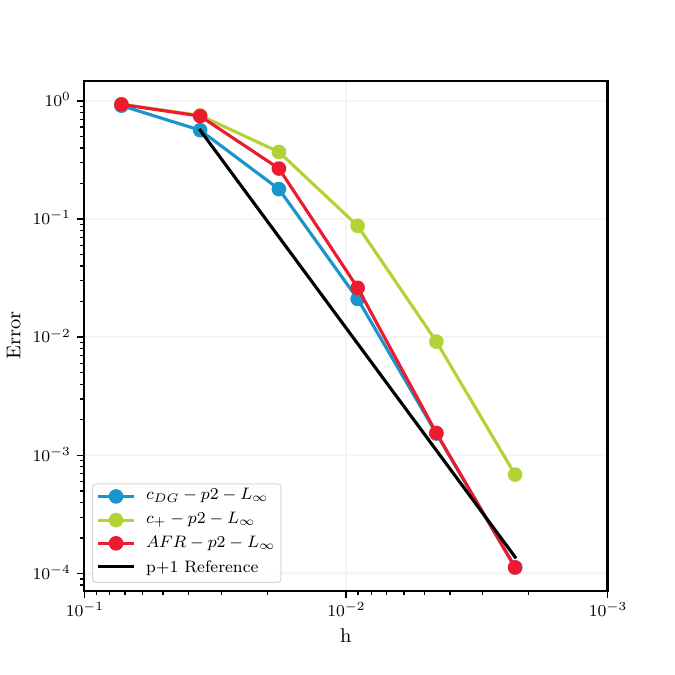}\\
   \includegraphics[width=0.35\textwidth]{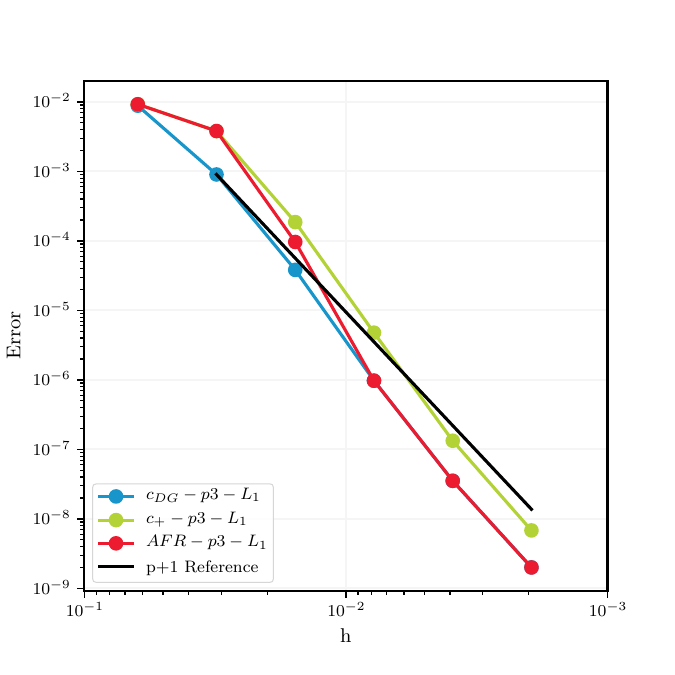}\includegraphics[width=0.35\textwidth]{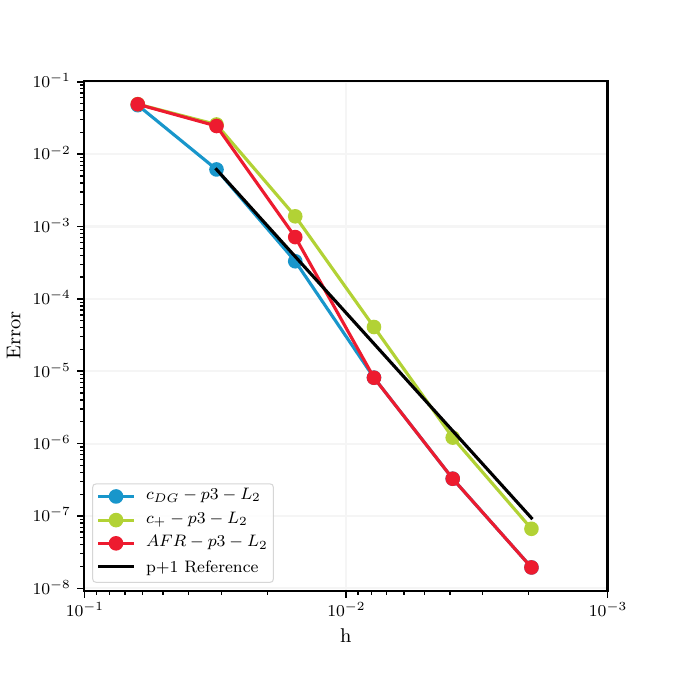}\includegraphics[width=0.35\textwidth]{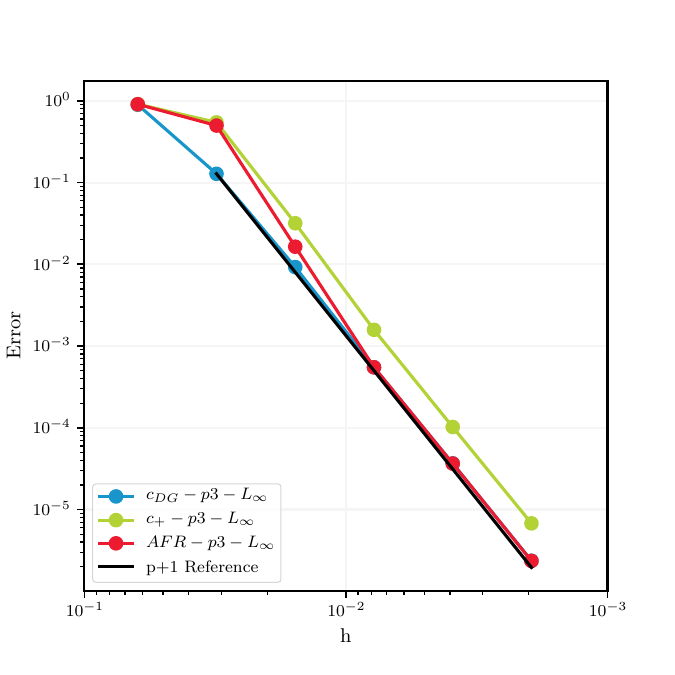}
    \caption{\textit{[Gaussian Pulse]} Plot of orders of convergence for the test results using DG, adaptive and FR schemes. Top row (p2):  $L_1$ (Left), $L_2$  (Middle) and $L_\infty$  (Right). Bottom row (p3):  $L_1$ (Left), $L_2$  (Middle) and $L_\infty$  (Right).}
    \label{fig:gp_ooa_plots}
\end{figure}

\subsection{Leblanc Shock Tube}
\label{sec: leblanc-shock-tube}
The modified version of the Leblanc Shock Tube case, as seen in \citet{zhang_ppl}, is used to further demonstrate the advantages of the adaptive scheme in cases involving strong shocks. The computational domain is $[-10.0, 10.0]$. The problem is initialized as,
\begin{equation}
    (\rho, w, p) = \begin{cases}
        (2,0,10^{9}) \qquad &\text{if} \: x < 0,\\
        (0.001,0,1) \qquad &\text{if} \: x \geq 0.
    \end{cases}
\end{equation}
The final time for this case is $t = 1\times 10^{-4}$. First, a comparison of maximum $CFL$ is conducted for polynomial degres $3$, $4$, and $5$. The degrees of freedom are kept at 1920 for all polynomial orders. The following table, Table \ref{tab:leblanc_cfl}, outlines the maximum $CFL$ for all three schemes for the polynomial orders $3,4,$ and $5$. The DG scheme has a $CFL$ number that is lower than that of the other schemes. The $CFL$ is approximately $5$ times and $40$ times less than the other schemes for $p=3$ and $p=4$, respectively. Additionally, the test fails for DG for $p=5$ despite using a low $CFL$ of $0.005$ while the other two schemes have a significantly higher $CFL$. The $p=5$ case is of special interest since we are not able to obtain a solution using the DG scheme, but with minimal adaptation, the scheme is able to obtain a solution with a reasonably high $CFL$. The adaptive scheme primarily uses DG across the solution except at the shocks and discontinuities, as demonstrated by the blast wave case, so we are able to achieve this significant improvement in robustness compared to the DG scheme by simply incorporating FR at the extremities of the solution. This demonstrates a clear advantage for the adaptive scheme over the classical DG scheme and highlights the need for the adaptive scheme, as it allows for a greater timestep to be taken while maintaining a lower level of error compared to the FR scheme.

\begin{table}[ht]
\centering
\caption{Maximum $CFL$ Number for each Leblanc Shock Tube Test Configuration}
\label{tab:leblanc_cfl}
\begin{tabular}{@{}cccc@{}}
\toprule
\multicolumn{1}{c}{Scheme} & \multicolumn{1}{c}{p3} & \multicolumn{1}{c}{p4} & \multicolumn{1}{c}{p5}\\ \midrule
DG    & $0.1$    & $0.005$   &\\
Adaptive       & $0.29$    & $0.21$    & $0.15$\\
FR    & $0.3$     & $0.23$    & $0.19$\\
\bottomrule
\end{tabular}
\end{table}

In addition to the $CFL$ tests, a grid refinement test is also conducted. Fine mesh resolutions are often required for the Leblanc Shock Tube, as it is a challenging problem to simulate. Added dissipation at the contact discontinuity can lead to smearing, which results in an overprediction of specific internal energy and consequently overpredicts the shock speed \cite{liu2009high}. This grid refinement study compares the performance of the adaptive scheme to the original DG and FR schemes as well as the exact solution. The density plots for the $p=3$ solution are presented in Fig.~\ref{fig:leblanc_p3} for meshes of size $N=900$ and $N=3600$. The adaptive and FR schemes were run with the $CFL$s reported in Table \ref{tab:leblanc_cfl}. The DG scheme on the other hand required the $CFL$ to be dropped to $0.06$ for the $N=900$ grid and could not be run for the $N=3600$ grid for a $CFL$ as low as $0.01$. 

In the first density plot, the final shock location of all three schemes is very close to the exact solution, just slightly offset. The offset of the final shock location also results in the region to the right of the rarefaction wave not reaching the correct magnitude. In addition to this, it is observed that the DG solution is extremely oscillatory in the wake of the shock and contact discontinuity due to the unmitigated spurious oscillations. The adaptive solution does not contain such oscillations despite the minimal adaptation performed, which is reflected in the FR Parameter plot below the density plot. This plot corresponds to the strength of the shock at that location and indicate where the lifting operator is adapted across the solution for the new scheme. The maximum possible value for the FR parameter in the $p=3$ test configuration is $c = 2.87e-5$. The highest value of $c$ used for the adaptive scheme is approximately $2.64e-5$ for $N=900$ at the primary shock, and the rest of the domain either requires little to no adaptation.

In the finer grid, we see that the final shock location for adaptive and FR match almost perfectly with the exact solution, and the magnitude of the solution to the right of the rarefaction wave is correct. Similar to the first plot, the FR parameter plot below the density plots indicates where the lifting operator is adapted across the solution for the new scheme. The highest value of $c$ used for the adaptive scheme is approximately $2.73e-5$ for $N=3600$ despite the maximum allowed value being $c = 2.87e-5$. This value is only employed at the shock, and the rest of the domain employs much lower values for the FR parameter (a majority of the domain employs a value of $0$). This case demonstrates the novel scheme's advantages as it is able to successfully simulate the Leblanc Shock Tube case with a $CFL$ close to the FR scheme, and it follows the exact solution closely with minimal adaptation, while the original DG scheme cannot run this test case in the given configuration. The novel scheme retains the advantages of FR while primarily employing DG and produces exemplary results for even extreme benchmark problems such as the Leblanc Shock Tube case.

\begin{figure}[h]
    \includegraphics[width=0.45\textwidth]{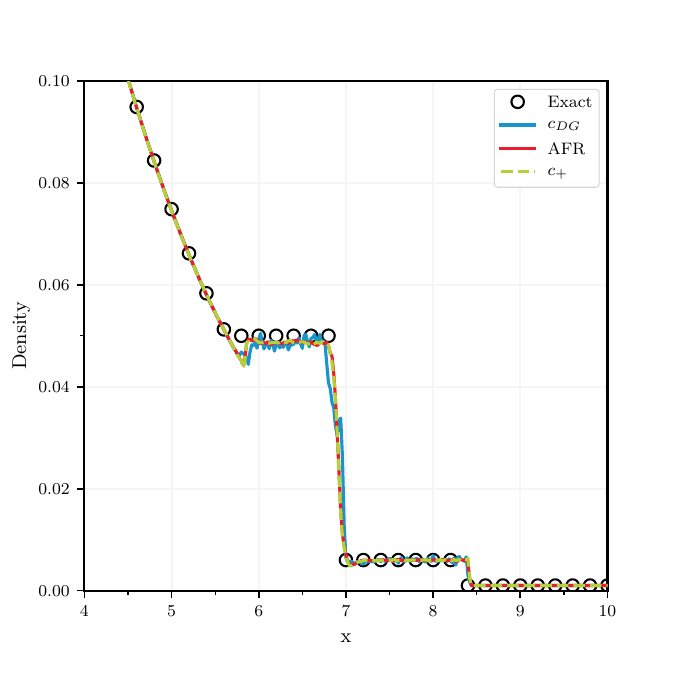}\includegraphics[width=0.45\textwidth]{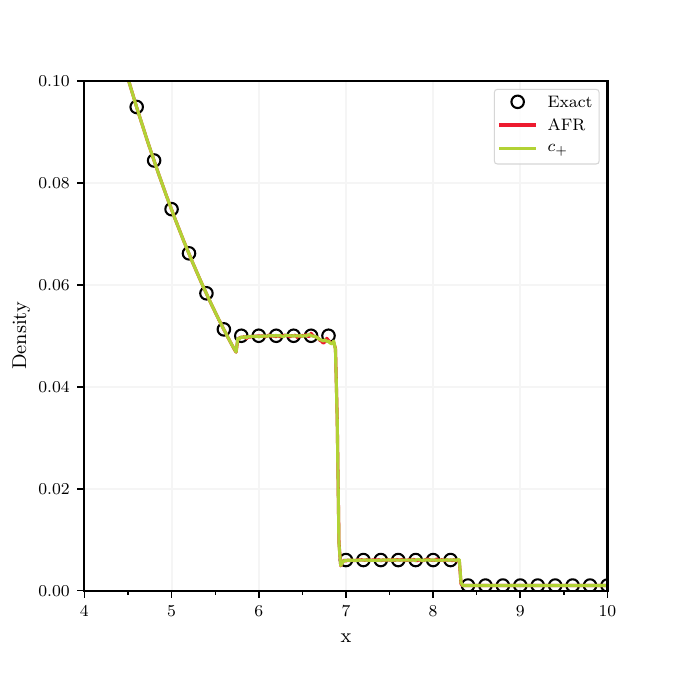}\\
    \includegraphics[width=0.45\textwidth]{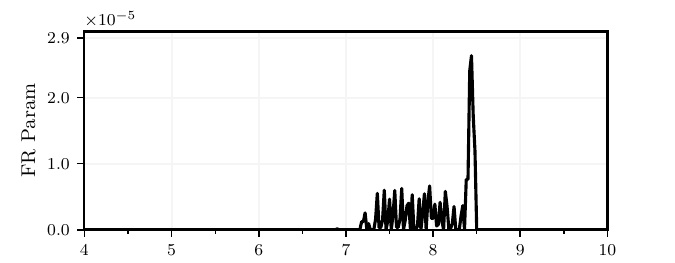}\includegraphics[width=0.45\textwidth]{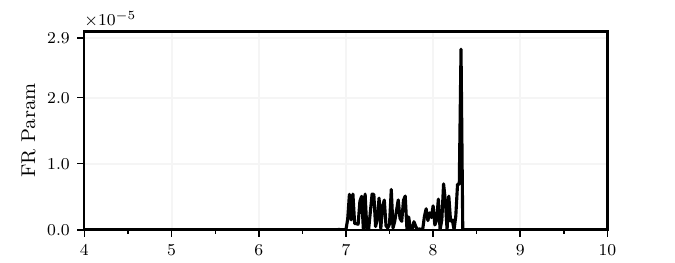}
    \caption{\textit{[Leblanc Shock Tube]} Plots of density and pressure within $15 \leq x \leq 18.75$ for $p=3$, NSFR-Ra, Roe dissipation for the $c_{DG}$, $c_+$ and AFR schemes with the plots underneath indicating the value of $c$ used for the AFR scheme}
    \label{fig:leblanc_p3}
\end{figure}
\subsection{Shock Diffraction}
\label{sec: shock-diffraction}
The shock diffraction case is a 2D problem wherein a Mach 5.09 shock propagates rightwards and diffracts at a $90\text{\textdegree}$ backward-facing corner. The solution is expected to have several key features. These features include the incident shock, the diffracted shock, the reflected expansion wave, and the vortex roll-up as highlighted in \cite{hillier1991computation}. This case is a good test of the schemes as the density below the $90\text{\textdegree}$ corner is very close to zero, and the oscillations in the wake of the shock result in nonphysical values occurring easily in the DG scheme if the $CFL$ is not substantially low. The adaptive scheme and the FR scheme both mitigate the oscillations in the wake of the shock, thereby allowing for a higher $CFL$ and a solution that better reflects those seen in literature such as \cite{zhang_ppl,xu2022third}. The final time used is $T=2.3$. The computational domain is $[0,1]\times[6,11]\cup[1,13]\times[0,11]$. The initial condition is
\begin{equation}
    (\rho, u, v, p) = \begin{cases}
        (\rho_{\text{left}}, u_{\text{left}}, v_{\text{left}}, p_{\text{left}}) \qquad &\text{if} \: x < 0.5\\
        (1.4,0,0,1) \qquad &\text{if} \: x > 0.5\\
    \end{cases}\\
\end{equation}
Where $\rho_{\text{left}} = 7.041132906907898$, $u_{\text{left}} = 4.07794695481336$, $v_{\text{left}} = 0$, $p_{\text{left}} = 30.05945$. The conditions for the left boundary above the backwards-facing corner are inflow (${x=0,6\leq y\leq 11}$) and wall boundary below the corner (${x=1,0\leq y\leq6}$). The bottom boundary is a wall below the corner (${0\leq x\leq1,y=6}$) and outflow above the corner (${y=0,1\leq x\leq13}$). The right boundary, ${x=13,0\leq y\leq11}$, is outflow and the top boundary, ${y=11,0\leq x\leq13}$, is a wall boundary. The AFR scheme is first run with its maximum $CFL$ value of $0.61$. The results are shown below in Fig.~\ref{fig:sd_afr} for the density and FR parameter.
\begin{figure}[h]
    \centering
    \includegraphics[width=0.47\linewidth]{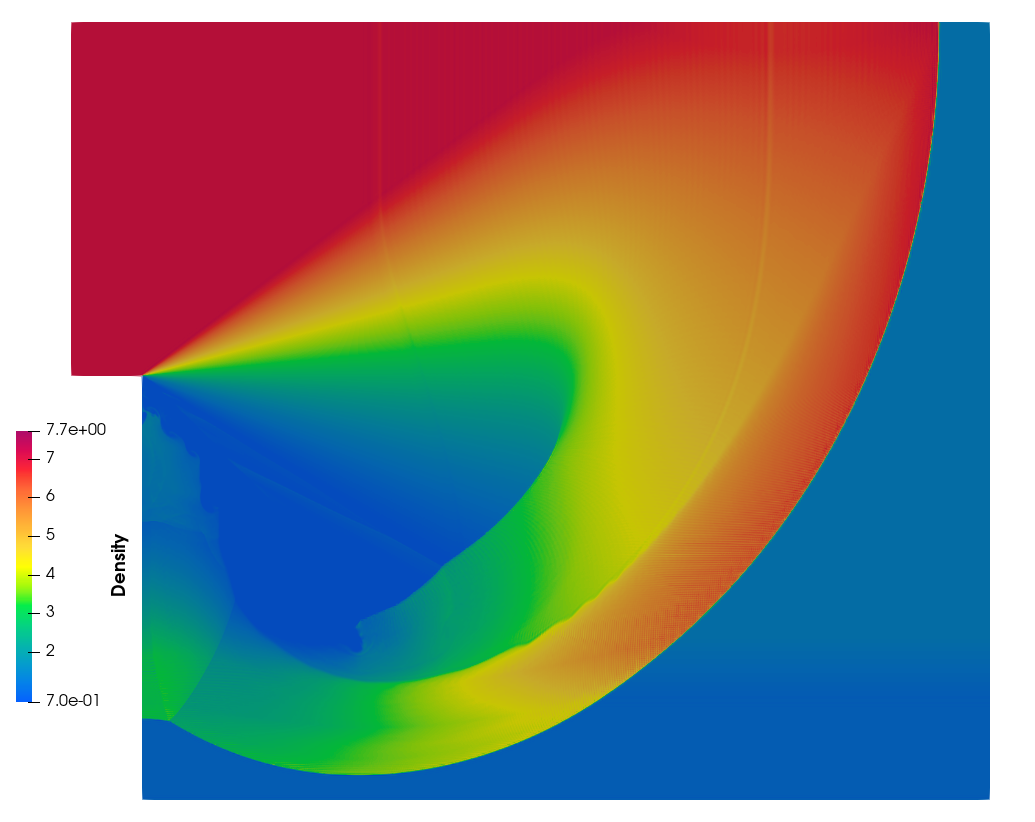}
    \includegraphics[width=0.47\linewidth]{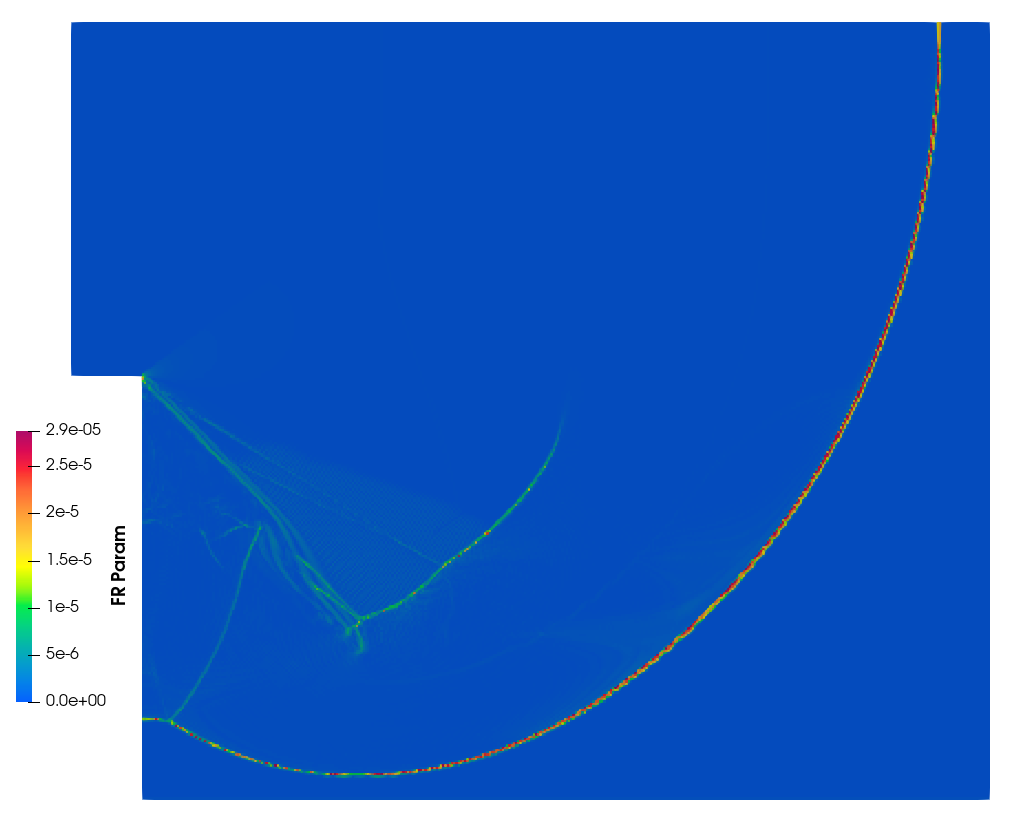}
    \caption{\textit{[Shock Diffraction]} P3 solution at $t=2.3s$ using Adaptive scheme - Density (left) and FR Parameter (right)}
    \label{fig:sd_afr}
\end{figure}

The plot of the density includes all the expected solution features, and the plot of the FR parameter shows that the maximum possible value for the parameter ($c=2.87e-5$) is only reached at the shock itself, while primarily DG is used everywhere else. This limited adaptation of the lifting operator allows for the $CFL$ to be increased from the maximum possible value of $0.45$ for DG, to a value of $0.61$, which is the maximum $CFL$ for the FR scheme before failure. Below, in Table~\ref{tab:sd_cfl}, the maximum $CFL$ numbers for each test configuration are shown, demonstrating the advantages of the adaptive scheme.
\begin{table}[ht]
\centering
\caption{Maximum $CFL$ Number for each Shock Diffraction Test Configuration}
\label{tab:sd_cfl}
\begin{tabular}{@{}cc@{}}
\toprule
\multicolumn{1}{c}{Scheme} & \multicolumn{1}{c}{p3}\\ \midrule
DG    & $0.45$ \\
Adaptive       & $0.61$  \\
FR    & $0.61$ \\
\bottomrule
\end{tabular}
\end{table}
To further demonstrate the advantages of the adaptive scheme, this test case is run with all three schemes for a $CFL$ of $0.45$. Plots are provided in Fig.~\ref{fig:sd_comparison} for the density solution of all schemes along the line $(1,0)\rightarrow(13,11)$. The tests are run for the maximum $CFL$ for each scheme respectively. Plots are provided for the FR parameter below the density plots to show how much the lifting operator is adapted at those points.

\begin{figure}[h]
    \centering
    \includegraphics[width=0.43\linewidth]{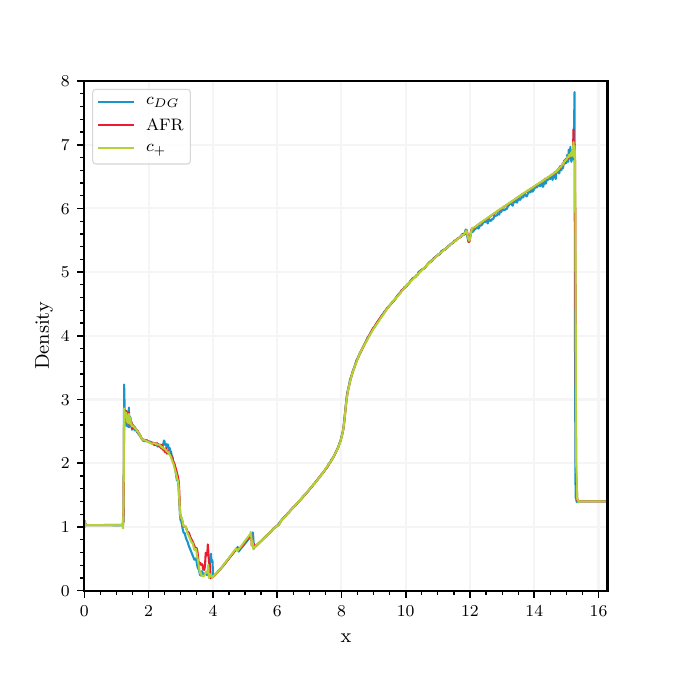}
    \includegraphics[width=0.43\linewidth]{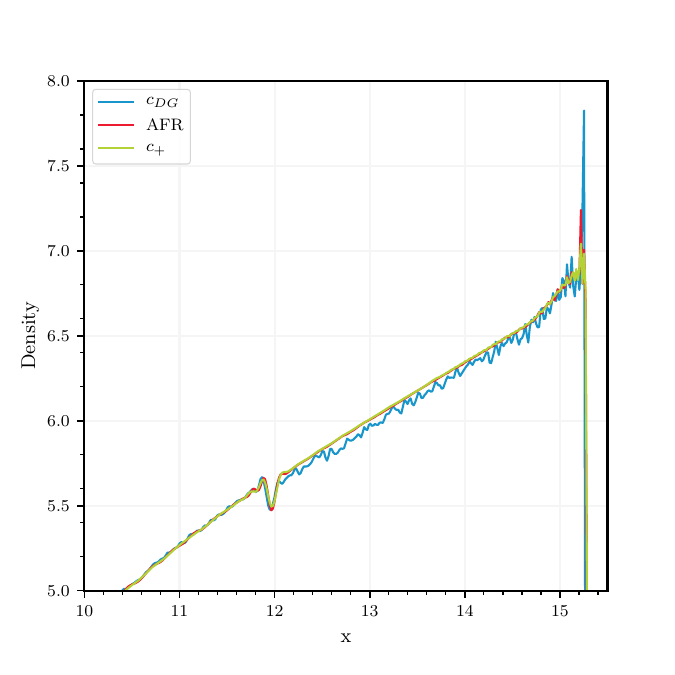}\\
    \includegraphics[width=0.43\linewidth]{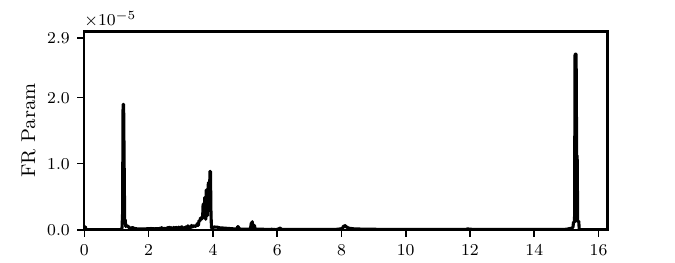}
    \includegraphics[width=0.43\linewidth]{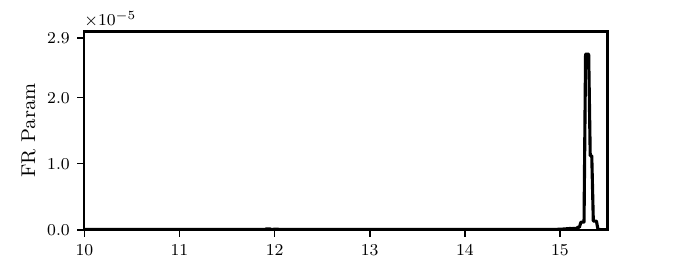}
    \caption{\textit{[Shock Diffraction]} Plot of density across $(1,0)\rightarrow(13,11)$ at $t=2.3s$ using for all three schemes (left) with a closer look at the wake of the shock (right)}
    \label{fig:sd_comparison}
\end{figure}

The plots effectively demonstrate that the DG scheme is very oscillatory and also has greater overshoot at the shock. The other two schemes achieve a solution that contains fewer oscillations and overshoot, especially in the wake of the shock. The FR parameter plots below indicate that the adaptation is primarily performed at the shocks, and DG is used everywhere else. Thus, through this minimal adaptation, we are able to see a great improvement in the mitigation of overshoots and oscillations, further demonstrating the advantage of the novel scheme.
\subsection{Double Mach Reflection}\label{sec: dmr}
The Double Mach Reflection (DMR) problem introduced by \citet{woodward_blast_dmr} involves a Mach 10 shock colliding with a reflecting wall. This paper uses the rectangular domain setup of the problem. The domain is $[0,4]\times[0,1]$ with the initial condition,
\begin{equation}
    (\rho, u, v, p) = \begin{cases}
        (8,\frac{33\sqrt{3}}{8},-\frac{33}{8},116.5) \qquad &\text{if} \: y > \sqrt{3}(x-\frac{1}{6})\\
        (1.4,0,0,1) \qquad &\text{if} \: y < \sqrt{3}(x-\frac{1}{6})
    \end{cases}
\end{equation}
The left boundary is inflow, and the right boundary is outflow. The bottom boundary corresponding to $0\leq x<\frac{1}{6}$ has the post-shock condition, while the rest of the bottom boundary is a wall boundary. To minimize implementation effort, the domain is extended in the $y$-direction such that the top boundary doesn't affect the domain of the test case. Thus, the computational domain is $[0,4]\times[0,3]$. The final time is $t=0.2s$. As outlined in \citet{vevek2019alternative}, the solution at the final time is a self-similar shock structure with two triple points and two slip lines. The reflected shock interacts with the primary slip line and generates KHI along the slip line. A jet is also present near the wall. To capture all the complex features of the solution, especially the secondary slip line, we require a high-order scheme with robust shock-capturing to mitigate the noise in the solution. As such, this case demonstrates the need for an adaptive scheme over the classical DG scheme.

\begin{figure}[h]
    \centering
    \includegraphics[width=0.85\textwidth,trim={0cm 2cm 0 3.5cm},clip]{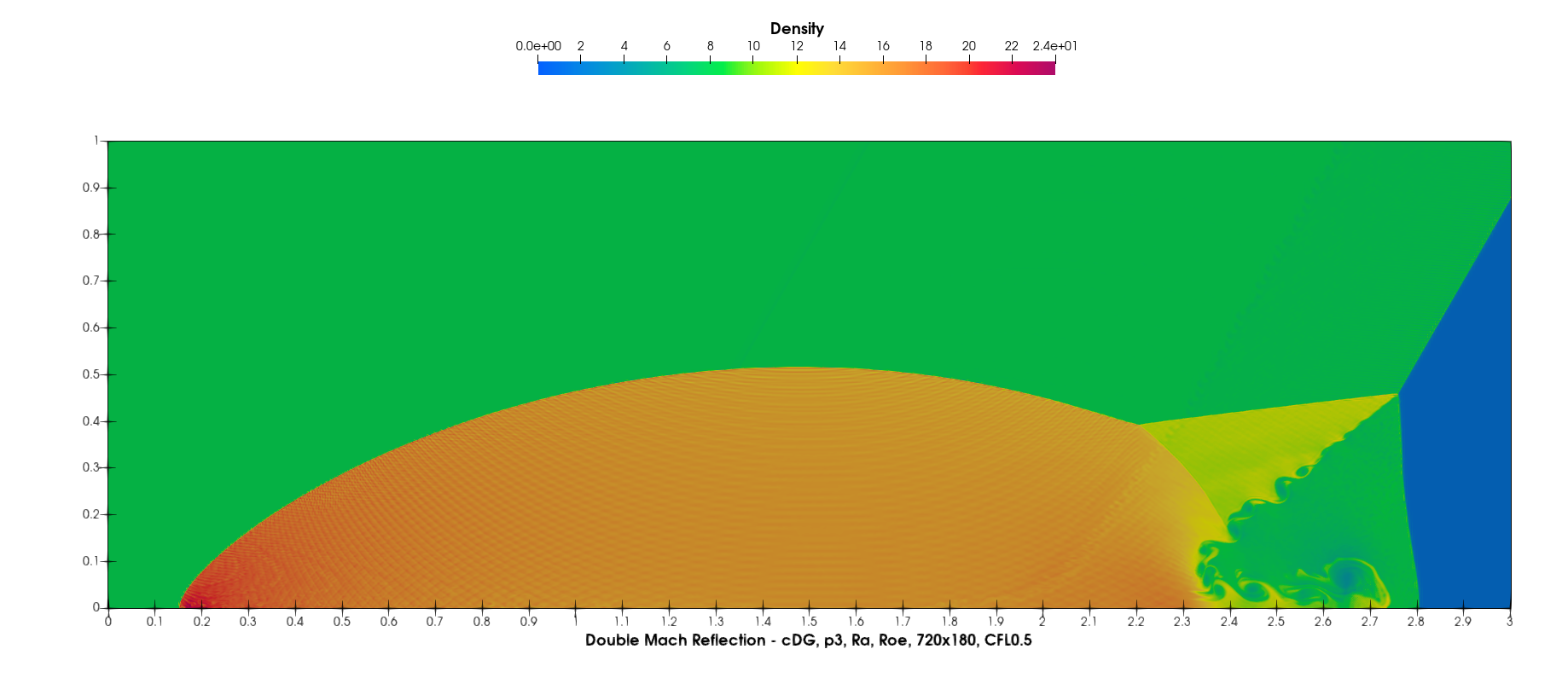}\\
    \includegraphics[width=0.85\textwidth,trim={0cm 2cm 0 3.5cm},clip]{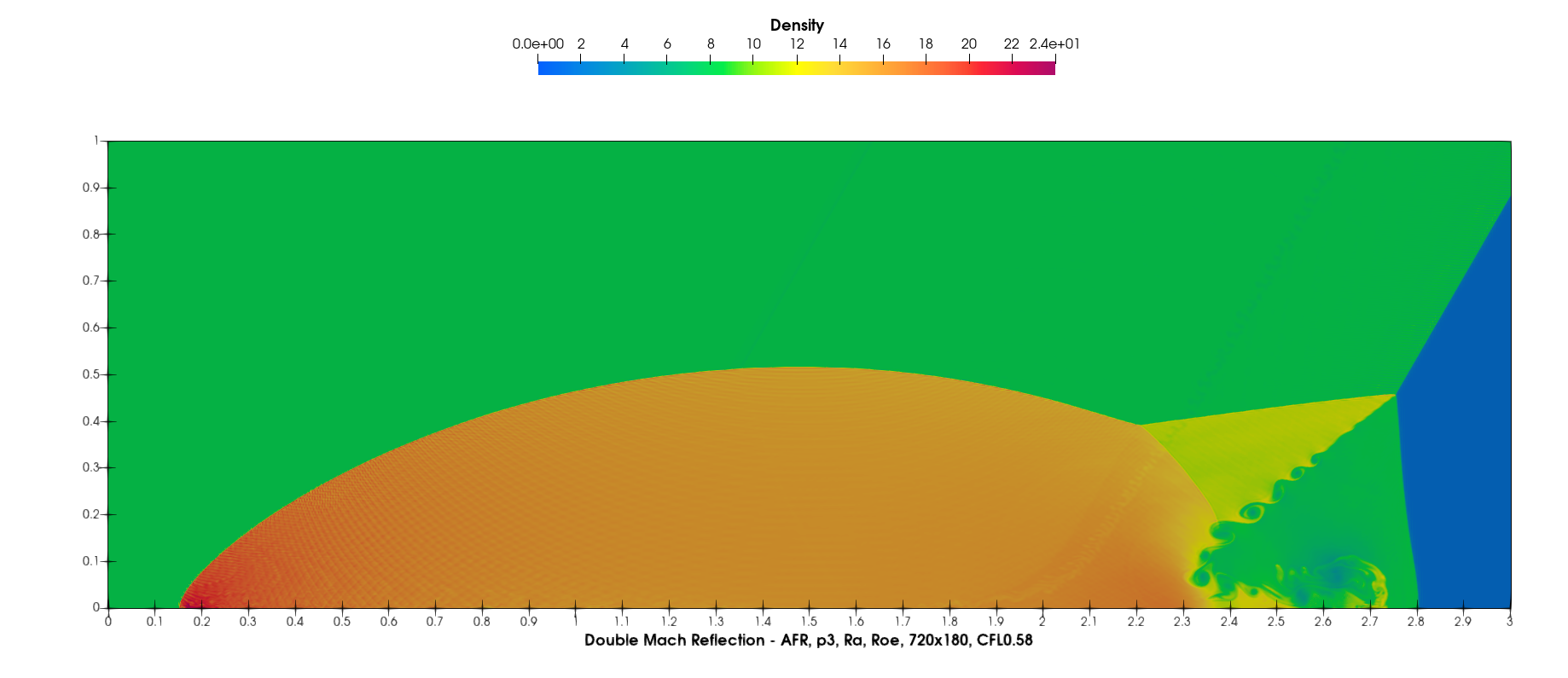}\\
    \includegraphics[width=0.85\textwidth,trim={0cm 2cm 0 3.5cm},clip]{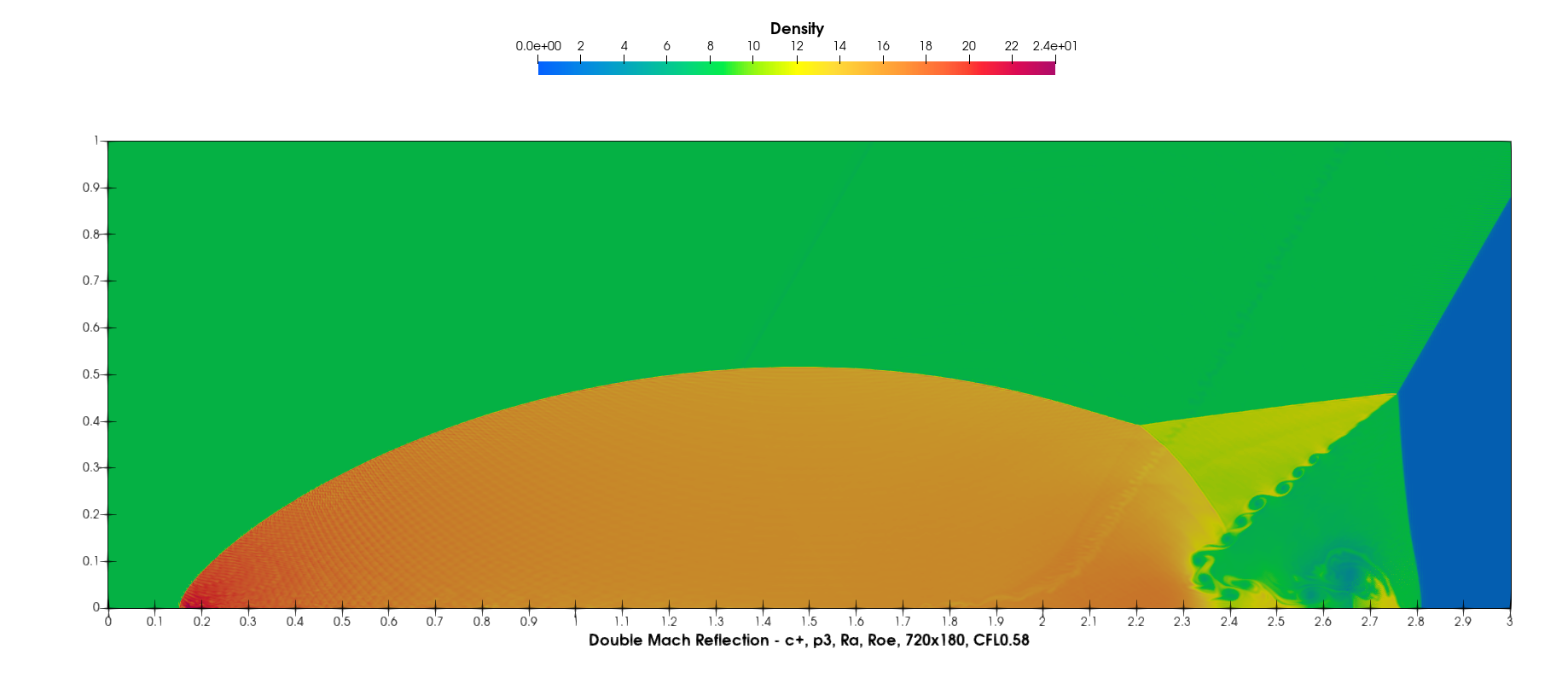}\\
    \caption{\textit{[DMR]} P3 Density solution at $t=0.2s$, using schemes - DG (top), Adaptive (middle), FR (bottom)}
    \label{fig:DMR_density}
\end{figure}

In Fig.~\ref{fig:DMR_density}, the solutions obtained using the different schemes are shown. 
All three solutions exhibit the expected features; however, the DG solution suffers from spurious oscillations, introducing significant noise that obscures the secondary slip line.
For the adaptive and FR schemes, the adaptation of the lifting operators leads to mitigation of the noise, and the secondary slip line is clearly present in the solution. In Fig.~\ref{fig:DMR_sensor}, the value of the FR parameter used in the adaptive scheme is shown. The maximum value for $p=3$ is $c=2.87e-5$, and this value is only reached in the region where the shock interacts with the wall. The rest of the solution, where the solution is adapted to FR, the value of $c$ is approximately $1/3$ of the value which is used across the entire domain for the FR scheme. The figure also shows that most of the solution is primarily DG. This demonstrates the advantages of using the adaptive scheme for strong shock problems that require high resolution, as we are able to obtain a solution that closely reflects the literature and contains minimal noise just by adapting the scheme to FR at the shocks.

\begin{figure}
    \centering
    \includegraphics[width=0.85\textwidth,trim={0cm 2cm 0 0.75cm},clip]{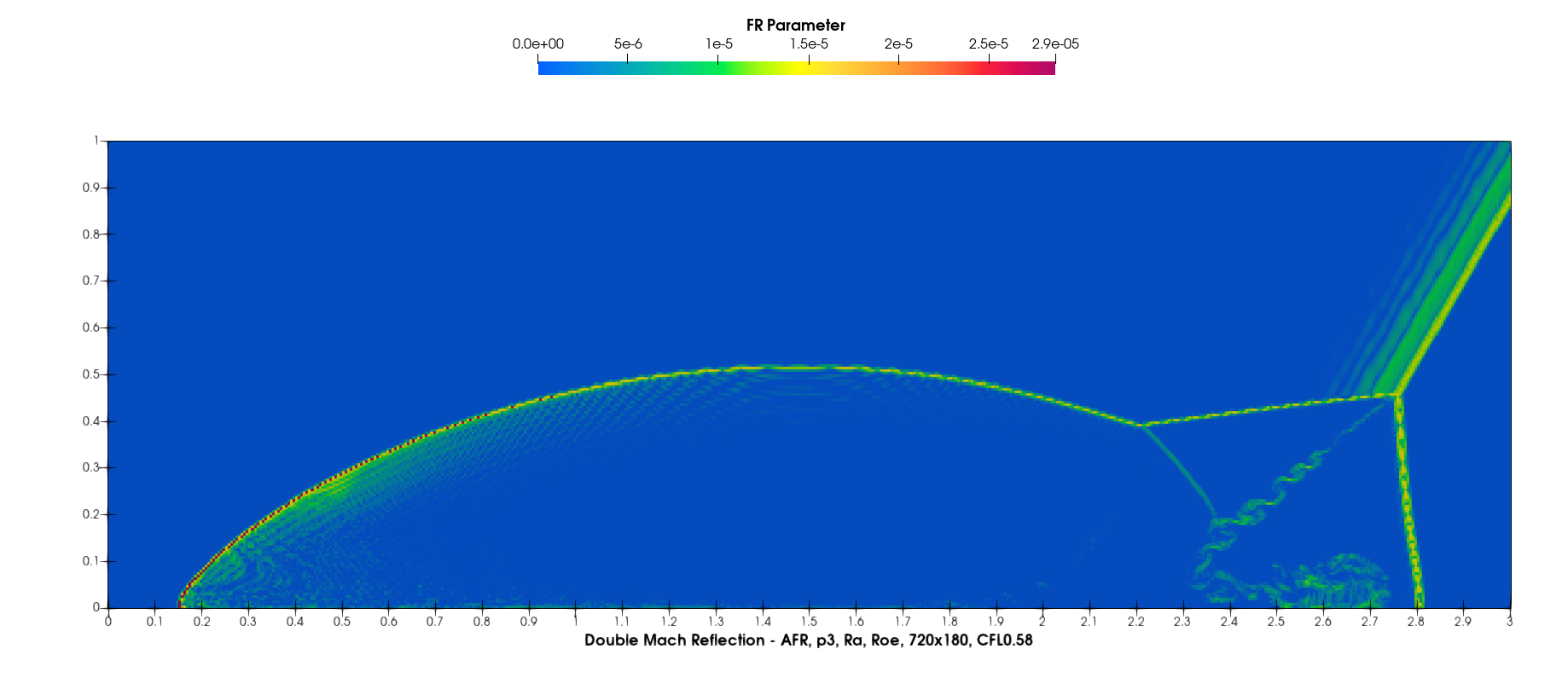}\\
    \caption{\textit{[DMR]} Plot of FR parameter at $t=0.2s$ for the adaptive scheme}
    \label{fig:DMR_sensor}
\end{figure}

\section{Conclusion}\label{sec: conclusion}
In this technical note, a novel scheme has been presented to overcome the phenomenon of unwanted high-frequency oscillations commonly present in strong shock problems while also retaining a high level of accuracy. The new scheme is an adaptive implementation of the NSFR scheme that uses the modal shock sensor of \citet{persson_modal} to incorporate FR into the DG scheme when shocks and discontinuities are detected. This is achieved by adjusting the local lifting operator through the integration of the FR matrix, which is scaled according to the sensor. This scheme retains all the critical properties of the NSFR scheme while also retaining the advantages of the DG and FR schemes, which it incorporates. The new scheme is shown to possess a lower error compared to the FR scheme in Sec.~\ref{sec: gaussian-pulse-test} and it also has a significant advantage in robustness and timestep efficiency compared to the DG sche,me which is demonstrated in Sec.\ref{sec: leblanc-shock-tube}, \ref{sec: shock-diffraction} and \ref{sec: dmr}. This novel scheme presents important advantages over both the original schemes that it incorporate,s thereby making it a desirable implementation. 
\section*{Acknowledgments}
\indent Sai Shruthi Srinivasan thanks the Vadasz Family Foundation and McGill Engineering Doctoral Award (MEDA). The authors would also like to thank Mathias Dufresne-Piché for helpful discussions throughout the course of this work. 
\bibliographystyle{model1-num-names}
\bibliography{refs}

\end{document}